\theoremstyle{plain}
\newtheorem{theorem}[equation]{Theorem}
\newtheorem{lemma}[equation]{Lemma}
\newtheorem{corollary}[equation]{Corollary}
\theoremstyle{remark}
\newtheorem{remark}[equation]{Remark}
\numberwithin{equation}{section}
\def\norm#1{\Vert#1\Vert}
\newcommand{\ca}{{\mathcal A}}
\newcommand{\ci}{{\mathcal I}}
\newcommand{\ck}{{\mathcal K}}
\newcommand{\co}{{\mathcal O}}
\newcommand{\B}{{\mathbb B}}
\newcommand{\C}{{\mathbb C}}
\newcommand{\R}{{\mathbb R}}
\newcommand{\Z}{{\mathbb Z}}
\begin{document}

\title[$L^p$ Bergman]{The Bergman projection on fat Hartogs triangles: $L^p$ boundedness}
\author{L. D. Edholm  \& J. D. McNeal}
\subjclass[2010]{32W05}
\begin{abstract}
 A class of pseudoconvex domains in $\mathbb{C}^{n}$ generalizing the Hartogs triangle is considered. The $L^p$ boundedness of the Bergman projection
 associated to these domains is established, for a restricted range of $p$ depending on the ``fatness'' of domains. This range of $p$ is shown to be sharp.
 \end{abstract}
\thanks{Research of the second author was partially supported by a National Science Foundation grant.}
\address{Department of Mathematics, \newline The Ohio State University, Columbus, Ohio, USA}
\email{edholm@math.ohio-state.edu}
\address{Department of Mathematics, \newline The Ohio State University, Columbus, Ohio, USA}
\email{mcneal@math.ohio-state.edu}

\maketitle 


\section{Introduction}\label{S:intro}

For $k \in \Z^+$, define the domain $\Omega_k\subset\C^2$ by

\begin{equation}\label{D:fatHartogs}
\Omega_k := \{(z_1, z_2) \in \C^2: |z_1|^k < |z_2| < 1 \},
\end{equation}
and call $\Omega_k$ the fat Hartogs triangle of exponent $k$. When $k=1$, \eqref{D:fatHartogs} is the classical Hartogs triangle, a well-known
example of a pseudoconvex domain with non-trivial Nebenh\" ulle. As $k\to\infty$, $\Omega_k$ becomes ``fatter'', filling out the product domain
$D\times D^*$ in the limit, where $D= \left\{z:|z|< 1\right\}$ is the unit disc and  $D^*=\left\{z: 0<|z|< 1\right\}$ the punctured unit disc in $\C$.

The purpose of this paper is to establish how the Bergman projection acts on the Lebesgue spaces $L^p\left(\Omega_k\right)$. If $A^p\left(\Omega_k\right)\subset L^p\left(\Omega_k\right)$ denotes the
closed subspace of holomorphic functions, let $\mathbf{B}_k: L^2(\Omega_k) \to A^2(\Omega_k)$  denote the Bergman projection (orthogonal projection) associated to $\Omega_k$. Our main result is:

\begin{theorem}\label{T:main}
The Bergman projection $\mathbf{B}_k$ is a bounded operator from $L^p(\Omega_k)$ to $A^p(\Omega_k)$ if and only if $p \in (\frac{2k+2}{k+2}, \frac{2k+2}{k})$.

\end{theorem}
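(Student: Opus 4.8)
The plan is to reduce everything to an explicit formula for the Bergman kernel of $\Omega_k$ and then to study the induced integral operator by Schur's test for the positive direction and by explicit test functions for sharpness. As $\Omega_k$ is Reinhardt, the square-integrable holomorphic Laurent monomials $z_1^{a}z_2^{b}$ form an orthogonal basis of $A^2(\Omega_k)$; holomorphy across the slice $\{z_1=0\}\subset\Omega_k$ forces $a\ge 0$, and integrating $|z_1^{a}z_2^{b}|^{2}$ in the polar radii $r=|z_1|,\ s=|z_2|$ over $\{0\le r,\ r^{k}<s<1\}$ shows that $z_1^{a}z_2^{b}\in A^2(\Omega_k)$ precisely when $a\ge 0$ and $a+1+k(b+1)>0$, with
\[
\norm{z_1^{a}z_2^{b}}_{L^2(\Omega_k)}^{2}=\frac{\pi^2 k}{(a+1)\,(a+1+k(b+1))}.
\]
Summing the reproducing series $B_k(z,w)=\sum z_1^{a}z_2^{b}\,\overline{w_1^{a}w_2^{b}}\big/\norm{z_1^{a}z_2^{b}}^{2}$, after grouping terms by the residue of $a$ modulo $k$ and evaluating the two resulting rational series, gives a closed form whose only singularities lie on $\{z_2\bar w_2=1\}$ and on $\{z_2\bar w_2=(z_1\bar w_1)^{k}\}$, each a pole of order two. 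The same formula arises from the proper map $\phi(z_1,z_2)=(z_1^{k}/z_2,\,z_2)\colon\Omega_k\to\D\times\D^{*}$, whose deck group is $z_1\mapsto\zeta z_1$ with $\zeta^{k}=1$, via Bell's transformation law together with $A^2(\D^{*})=A^2(\D)$. The two singular sets correspond respectively to the outer boundary $|z_2|=1$ and the inner boundary $|z_2|=|z_1|^{k}$.

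For the positive direction, note first that $\mathbf B_k$ is self-adjoint on $L^2(\Omega_k)$, so it is bounded on $L^p$ if and only if it is bounded on $L^{p'}$; since the endpoints are conjugate, $\big(\tfrac{2k+2}{k+2}\big)'=\tfrac{2k+2}{k}$, it suffices to prove boundedness for $2\le p<\tfrac{2k+2}{k}$. I would run Schur's test on the operator with kernel $|B_k(z,w)|$ using the power weight $h(z)=|z_2|^{\beta}$, adapted to the Reinhardt symmetry: the test reduces to finding $\beta=\beta(p)$ for which
\[
\int_{\Omega_k}|B_k(z,w)|\,|w_2|^{\beta p'}\,dV(w)\ \lesssim\ |z_2|^{\beta p'},
\]
together with the symmetric estimate obtained by exchanging $z$ and $w$ and replacing $p'$ by $p$. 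Inserting the explicit kernel and decomposing $\Omega_k$ into a neighborhood of $|z_2|=1$, a neighborhood of $|z_2|=|z_1|^{k}$, and a neighborhood of the origin, each piece collapses to one-variable integrals of Beta-function type; the resulting convergence conditions carve out an interval of admissible $\beta$ that is nonempty exactly when $p<\tfrac{2k+2}{k}$.

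For sharpness I would show $\mathbf B_k$ is unbounded on $L^p$ when $p\ge\tfrac{2k+2}{k}$, the lower endpoint then following by duality. The mechanism is a lower bound for $|B_k(z,w)|$ when $z,w$ both lie in a small anisotropic box $Q_\delta$ straddling the inner boundary near the origin; pairing $\mathbf B_k$ against the $L^p$- and $L^{p'}$-normalized indicators of such boxes and letting $\delta\downarrow 0$ forces the operator norm to diverge precisely at and beyond the endpoint. Concretely this can be packaged either through the off-diagonal kernel estimate on $Q_\delta$ or by testing on an explicit family such as $f_\varepsilon=(|z_2|^{2}-|z_1|^{2k})^{-1+\varepsilon}\chi$ and checking $\norm{\mathbf B_k f_\varepsilon}_{L^p}\big/\norm{f_\varepsilon}_{L^p}\to\infty$.

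The main obstacle is the Schur estimate in the neighborhood of the origin, where the branch locus $\{z_1=0\}$ of $\phi$ meets the inner boundary and the two order-two singular factors of $B_k$ compete; it is the balancing of these against the weight $|z_2|^{\beta p'}$ there that pins down the exact exponent $\tfrac{2k+2}{k}$ and forces the admissible $\beta$-interval to close up at the endpoint. The outer boundary and the smooth part of the inner boundary, by contrast, behave like the familiar disc and polydisc cases and impose no additional restriction.
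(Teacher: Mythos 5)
Your overall architecture---explicit kernel from the monomial basis (equivalently via the proper map to $D\times D^*$ and Bell's transformation law), Schur's test for the positive range, a test function plus duality for sharpness---matches the paper's. But there is a genuine gap in the positive direction: the auxiliary function $h(z)=|z_2|^{\beta}$ cannot make Schur's test work. The kernel satisfies $|\B_k(z,w)|\lesssim |t|\,|1-t|^{-2}|t-s^k|^{-2}$ with $t=z_2\bar w_2$, $s=z_1\bar w_1$, and already its unweighted integral in $w$ diverges like $-\log(1-|z_2|)$ as $z$ approaches the outer boundary, and like $-\log\left(1-|z_1|^{2k}/|z_2|^{2}\right)$ as $z$ approaches the inner boundary away from the origin. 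Since $|w_2|^{\beta p'}$ is comparable to a constant near both of these sets for every choice of $\beta$, the required inequality $\int_{\Omega_k}|\B_k(z,w)|\,|w_2|^{\beta p'}\,dV(w)\lesssim |z_2|^{\beta p'}$ fails there: the left side is unbounded while the right side tends to a constant. Your closing claim that the outer boundary and the smooth part of the inner boundary ``behave like the familiar disc case and impose no additional restriction'' is exactly where the error hides---in the disc case Schur's test already requires a weight $(1-|z|^2)^{\epsilon}$ vanishing \emph{at the boundary}, not at an interior point. The auxiliary function must vanish on all of $b\Omega_k$; the paper takes $h(z)=(|z_2|^2-|z_1|^{2k})(1-|z_2|^2)$ and proves the Schur inequality for $\epsilon\in[\tfrac12,\tfrac{k+2}{2k})$, the upper limit coming from integrability of $|w_2|^{2/k-1-2\epsilon}$ near $w_2=0$ and the lower limit from absorbing a leftover factor $|z_2|^{2\epsilon-1}$; that interval is what produces $(\tfrac{2k+2}{k+2},\tfrac{2k+2}{k})$. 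A weight depending on $|z_2|$ alone, however tuned, never sees the singular factors $|1-t|^{-2}$ and $|t-s^k|^{-2}$.

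On sharpness your plan is workable in principle but riskier than necessary. Pointwise lower bounds for $|\B_k|$ on boxes near the inner boundary are delicate because $\B_k$ has zeroes when $k>1$ (this is precisely why the paper cannot transfer the problem to $D\times D^*$), and a box-pairing argument typically yields unboundedness only strictly beyond a threshold, so the endpoint $p=\tfrac{2k+2}{k}$ itself would need a separate logarithmic refinement. The paper's route is exact and short: using the orthogonal monomial basis one computes $\mathbf{B}_k(\bar z_2)=C/z_2$, and $\int_{\Omega_k}|z_2|^{-p}\,dV\approx\int_0^1 r_2^{\,1-p+2/k}\,dr_2$ diverges precisely when $p\ge\tfrac{2k+2}{k}$, which together with the duality lemma closes both endpoints.
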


The most notable aspect of our result comes by comparison with the known behavior of the Bergman projection associated to smoothly bounded, pseudoconvex domains. There is no general
result for all such domains, but $L^p$ mapping properties of the Bergman projection have been determined for large classes of domains, a subset of the family of {\it finite type}
domains defined in \cite{DAngelo82}. See \cite{PhoSte77}, \cite{McNeal89}, \cite{McNSte94}, \cite{NagRosSteWai89}, \cite{Koenig02} for the principal results. In all these cases, the Bergman projection is bounded on $L^p$ for all $1 <p<\infty$. In two more recent works,
\cite{EhsLie08} and \cite{LanSte12}, the Bergman projection is also shown to be bounded for the full range $1 <p<\infty$ on some domains with less smooth boundary, but only for strongly pseudoconvex domains in this class.

Our domains $\Omega_k$ are bounded and pseudoconvex. But in contrast to the situations above, the boundary of $\Omega_k$, $b\Omega_k$, has a serious singularity at the origin. 
Indeed, $b\Omega_k$ cannot be represented as the graph of a continuous function in a neighborhood of $(0,0)$. This boundary singularity causes the limited range of $L^p$
boundedness in Theorem \ref{T:main}, though our present understanding of this matter is only phenomenological. Determining how the type of boundary singularity of a domain limits 
$L^p$ mapping behavior of its Bergman projection remains an intriguing open problem, and is highly connected to many other questions in complex analysis. We do point out that
our result shows a somewhat surprising degeneracy of an analytic property relative to limits of domains. Namely, note that the range of $L^p$ boundedness for $\mathbf{B}_k$ in Theorem \ref{T:main} 
shrinks to 2 as $k\to\infty$, even though the domains $\Omega_k$ tend set-theoretically to the domain $D\times D^*$, whose Bergman projection is $L^p$ bounded for all $1 <p<\infty$.

Several interesting, recent papers overlap with our work. Zeytuncu, \cite{Zey13}, constructed bounded domains in $\C^n$ whose Bergman projection is only bounded on $L^p$ for a restricted
range of $p$, namely a H\"older symmetric interval of values centered at 2. He also constructed domains whose Bergman projection is only bounded on $L^p$ for $p=2$.
In \cite{ChaZey14}, Chakrabarti and Zeytuncu consider the classical Hartogs triangle, $\Omega_1$ above, and prove $L^p$ boundedness for the Bergman projection
corresponding to Theorem \ref{T:main} for the case $k=1$. They also observed a new phenomena on $\mathbf{B}_1$ related to its weighted $L^p$ mapping behavior. In \cite{Chen14}, Chen generalizes the Hartogs triangle differently than \eqref{D:fatHartogs}, essentially considering domains of the form $\left\{(z_1, z_2) \in \Omega \times \C: \left|\phi\left(z_1\right)\right| < |z_2| < 1 \right\}$ for $\phi: \Omega \to D$ a biholomorphic map
onto $D\subset\C$. He then obtains $L^p$ boundedness of the Bergman projection for the same restricted range of $p$ as \cite{ChaZey14} shows for the Hartogs triangle.  Earlier examples of restricted $L^p$ mapping behavior of the Bergman projection on specific domains were given by Lanzani and Stein, \cite{LanSte04}, and Krantz and Peloso, \cite{KraPel08}.

The situation considered in this paper differs from that in \cite{ChaZey14} and \cite{Chen14} in one important respect: the domains $\Omega_k$ are not biholomorphic
to $D\times D^*$ if $k>1$. Indeed, the first author shows in \cite{Edh15} that the Bergman kernel associated to $\Omega_k$ has zeroes if $k>1$, while the Bergman kernel
of $D\times D^*$ is known to be zero-free. Thus, to prove Theorem \ref{T:main} we must directly address the Bergman projection on $\Omega_k$, rather than transfer to $D\times D^*$ as done in \cite{ChaZey14} and \cite{Chen14}. 

There is a general, though technical, point in our proofs below that seems worth noting. Once the crucial estimate on the Bergman kernel of $\Omega_k$, \eqref{E:basicEstimate}, is in hand, we focus on suitably decomposing $\Omega_k$ in order to get the desired integral estimates. This basic method is derived from \cite{McNSte94}, and is used particularly in Lemma \ref{L:calculus1}
and Lemma \ref{L:calculus2}. The method displays the interplay between the singularity of the Bergman kernel and the shape of $\Omega_k$ most clearly, thus showing how to generalize the mapping result to other kernels or to the Bergman kernel on domains other than $\Omega_k$. For an example of the former, see Corollary \ref{C:extendedLp}. The semi-classical proofs of the $L^p$ boundedness of $\mathbf{B}_\Omega$ -- for $\Omega$ the disc in $\C$, the ball in $\C^n$, or the polydisc in $\C^n$, see \cite{RudinFunctiontheory}, \cite{ForRud74}, \cite{Chen14} -- are based on power series computations and the asymptotics of the Gamma or Beta functions, which are less amenable for generalization to domains without circular symmetry.

\bigskip


\section{Preliminaries}\label{S:prelim}

If $\Omega\subset\C^n$ is a domain, let $\co(\Omega)$ denote the holomorphic functions on $\Omega$. The standard $L^2$ inner product will be denoted

\begin{equation}\label{D:innerProduct}
\left\langle f,g\right\rangle =\int_\Omega f\cdot \bar g\, dV
\end{equation}
where $dV$ denotes Lebesgue measure on $\C^n$. $L^2(\Omega)$ denotes the measurable functions $f$ such that $\langle f,f\rangle =\|f\|^2 <\infty$ and $A^2(\Omega)=\co(\Omega)\cap L^2(\Omega)$.

The Bergman projection associated to $\Omega$ will be written $\mathbf{B}_\Omega$, or $\mathbf{B}$ if $\Omega$ is clear, and is the orthogonal projection operator
$\mathbf{B}:L^2(\Omega)\longrightarrow A^2(\Omega)$.
It is elementary that $\mathbf{B}$ is self-adjoint with respect to the inner product \eqref{D:innerProduct}. The Schwarz kernel of $\mathbf{B}_\Omega$ is the Bergman kernel, denoted $\B_\Omega(z,w)$, which
satisfies

\begin{equation*}
\mathbf{B}_\Omega f(z)=\int_\Omega \B_\Omega(z,w) f(w)\, dV(w),\qquad f\in L^2(\Omega).
\end{equation*}

For $p>0$, denote

\begin{equation*}
L^p(\Omega) =\left\{f\, :\,\,\left(\int_\Omega |f|^p\, dV\right)^{\frac 1p}:= \|f\|_p <\infty\right\}
\end{equation*}
and let $A^p(\Omega)=\co(\Omega)\cap L^p(\Omega)$.

Finally, we use the following notation to simplify writing various inequalities. If $A$ and $B$ are functions depending on several variables, write $A\lesssim B$ to signify that there exists a 
constant $K>0$, independent of relevant variables, such that $A\leq K\cdot B$. The independence of which variables will be clear in context. Also write $A\approx B$ to mean that
$A\lesssim B\lesssim A$.
\bigskip

\subsection{The Bergman kernel on fat Hartogs triangles}

The Bergman kernel for the fat Hartogs triangle of exponent $k \in \Z^+$ is explicitly computed in \cite{Edh15}, and its formula underlies the work done in this paper.  Let $\B_k$ denote the Bergman kernel on $\Omega_k$, and for simplicity write $s := z_{1}\bar{w}_{1}, \, t := z_{2}\bar{w}_{2}$.  Then

\begin{equation*}
\B_{k}(z,w) = \frac{p_{k}(s)t^{2}+q_{k}(s)t + s^{k}p_{k}(s)}{k\pi^{2}(1-t)^{2}(t -s^{k})^{2}},
\end{equation*}
where $p_k$ and $q_k$ are polynomials given by

\begin{equation*}
p_{k}(s) = \sum_{n=1}^{k-1} n(k-n)s^{n-1},\ q_{k}(s) = \sum_{n=1}^{k}(n^2 + (k-n)^2 s^k)s^{n-1}.
\end{equation*}
Since $\Omega_k$ is a bounded domain where $|s|^k < |t| < 1$, it follows that $\B_k$ satisfies the crucial estimate

\begin{equation}\label{E:basicEstimate}
|\B_{k}(z,w)| \lesssim \frac{|t|}{|1-t|^2|t-s^k|^2}.
\end{equation}
\bigskip

\subsection{Boundedness and Dual Spaces} The fact that the Bergman projection is self-adjoint, together with H\" older's inequality, show that the range of $p$ for which $\mathbf{B}$ is $L^p$ bounded in symmetric about $L^2$:

\begin{lemma}\label{L:symmetricLp}  Let $\Omega$ be a bounded domain and let $p > 1$.  If $\mathbf{B}$ maps $L^p(\Omega)$ to $A^p(\Omega)$ boundedly, then it also maps $L^q(\Omega)$ to $A^q(\Omega)$ boundedly, where $\frac 1p+ \frac 1q =1$.
\end{lemma}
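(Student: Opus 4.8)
The plan is to exploit the self-adjointness of $\mathbf{B}$ together with the duality between $L^p$ and $L^q$. The key observation is that for a bounded operator $T$ on $L^p$, boundedness of the adjoint $T^*$ on $L^q$ (the conjugate exponent) is equivalent to boundedness of $T$ on $L^p$, with equal operator norms. Since $\mathbf{B}$ is self-adjoint with respect to the inner product \eqref{D:innerProduct}, the adjoint and the operator coincide, so $L^p$ boundedness should transfer directly to $L^q$ boundedness.

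To make this precise, I would first recall the standard characterization of the $L^q$ norm by duality: for $g \in L^q(\Omega)$,
\begin{equation*}
\|g\|_q = \sup\left\{ \left| \int_\Omega g \cdot \bar h \, dV \right| : h \in L^p(\Omega),\ \|h\|_p \leq 1 \right\},
\end{equation*}
where the supremum may be taken over a dense class of test functions $h$. The boundedness of $\Omega$ guarantees that the relevant function spaces pair correctly and that bounded functions are dense, so there are no integrability obstructions in forming $\langle \mathbf{B} f, h\rangle$. Then for $f \in L^p(\Omega)$ I would estimate $\|\mathbf{B} f\|_q$ by testing against such $h$: using self-adjointness, $\langle \mathbf{B} f, h \rangle = \langle f, \mathbf{B} h \rangle$, where the inner product is the conjugate-linear pairing in \eqref{D:innerProduct}.

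Applying H\"older's inequality to $\langle f, \mathbf{B} h\rangle$ with exponents $p$ and $q$ gives
\begin{equation*}
\left| \langle f, \mathbf{B} h \rangle \right| \leq \|f\|_p \, \|\mathbf{B} h\|_q.
\end{equation*}
The hypothesis is that $\mathbf{B}$ is bounded on $L^p$, which controls $\|\mathbf{B} h\|_p$, not $\|\mathbf{B} h\|_q$; so the roles of $p$ and $q$ must be arranged carefully. The correct pairing is to test $\mathbf{B} f$ (with $f \in L^q$, the conclusion we want) against $h \in L^p$, so that $\langle \mathbf{B} f, h \rangle = \langle f, \mathbf{B} h\rangle$ and H\"older with exponents $q,p$ yields $\|f\|_q \|\mathbf{B} h\|_p \leq \|f\|_q \cdot C \|h\|_p$, where $C$ is the $L^p$ operator norm. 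Taking the supremum over $\|h\|_p \leq 1$ recovers $\|\mathbf{B} f\|_q \leq C \|f\|_q$, as desired.

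The main point requiring care, rather than a serious obstacle, is the justification that one may indeed take the supremum over a class of $h$ for which all pairings are finite and for which $\langle \mathbf{B} f, h \rangle = \langle f, \mathbf{B} h \rangle$ holds; this uses that $\Omega$ is bounded (so $L^p(\Omega) \cap L^q(\Omega)$ contains a dense set, e.g.\ bounded functions) and that $\mathbf{B}$ agrees with its $L^2$-definition on this dense set. Once that density and the self-adjointness identity are in place, the estimate is immediate from H\"older's inequality and the definition of the dual norm.
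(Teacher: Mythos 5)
Your proposal is correct and follows essentially the same route as the paper: compute $\|\mathbf{B}f\|_q$ by duality against $h\in L^p$ with $\|h\|_p\le 1$, move $\mathbf{B}$ across the pairing by self-adjointness, and apply H\"older together with the assumed $L^p$ bound on $\mathbf{B}h$. The extra care you take about density and the validity of $\langle \mathbf{B}f,h\rangle=\langle f,\mathbf{B}h\rangle$ on the relevant function classes is a refinement of, not a departure from, the paper's argument.
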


\begin{proof}
This follows from a standard duality argument. Let $f \in L^q(\Omega)$.  Then

\begin{align*}
\left\|\mathbf{B}f\right\|_q &= \sup_{\norm{g}_p = 1} \left|\left\langle \mathbf{B}f, g \right\rangle\right| = \sup_{\norm{g}_p = 1} \left|\left\langle f, \mathbf{B}g \right\rangle\right| \\
	&\leq \sup_{\norm{g}_p = 1} \left( \norm{f}_q \norm{\mathbf{B}g}_p \right)
	\lesssim \norm{f}_q.
\end{align*}

\end{proof}
\bigskip

\subsection{A version of Schur's Lemma} The term on the right-hand side of the estimate \eqref{E:basicEstimate} is not integrable on $\Omega_k$, which prevents the classical Young's test for $L^p$ boundedness to be used in our case. See, e.g. \cite{FollandPDE2} Theorem 0.10 for Young's test and \cite{CucMcN06} for some generalizations of it. However, the bounding term just barely fails to be integrable; a computation shows that its $L^1$ norm (in $w$) is essentially bounded by $-\log\delta(z)$ near $0$, where $\delta(z)$ is the distance of $z$ to $b\Omega_k$.  In this situation, a substitute for Young's test, due to Schur, can be used to give $L^p$ boundedness. See, e.g. \cite{McNSte94}. We shall need a modification of Schur's lemma, which relates the range of exponents $\epsilon$ of the auxiliary function and the mapping classes $L^p$.

\begin{lemma}[Schur's Lemma]\label{L:SchursLemma}  Let $\Omega \subset \C^n$ be a domain, $K$ be an a.e. positive, measurable function on $\Omega \times \Omega$, and $\ck$ be the integral operator with kernel $K$. Suppose there exists a positive auxiliary function $h$ on $\Omega$, and numbers $0 < a < b$ such that for all $\epsilon \in [a,b)$, the following estimates hold:

\begin{align*}
\ck(h^{-\epsilon})(z) := \int_{\Omega} K(z,w) h(w)^{-\epsilon} \,dV(w)  \lesssim h(z)^{-\epsilon} \\
\ck(h^{-\epsilon})(w) := \int_{\Omega} K(z,w) h(z)^{-\epsilon} \,dV(z)  \lesssim h(w)^{-\epsilon}. \\
\end{align*}
Then $\ck$ is a bounded operator on $L^p(\Omega)$, for all $p \in (\frac{a+b}{b},\frac{a+b}{a})$.

\begin{proof}
Let $p$ and $q$ be finite conjugate exponents, and temporarily say that $q \le 2 \le p$.  Let $g \in L^p(\Omega)$, and $s \in [a,b)$ be a yet to be specified number.  Then H\"older's inequality gives
\begin{align*}
|\ck(g)(z)|^p 
	&\le \bigg( \int_{\Omega} K(z,w) |g(w)|^p h(w)^{sp/q} \, dV(w) \bigg) \bigg( \int_{\Omega} K(z,w) h(w)^{-s} \, dV(w) \bigg)^{p/q} \\
	&\lesssim \bigg( \int_{\Omega} K(z,w) |g(w)|^p h(w)^{sp/q} \, dV(w) \bigg) h(z)^{-sp/q}
\end{align*}
Now integrate in the $z$ variable to find
\begin{align*}
\int_{\Omega} |\ck(g)(z)|^p \, dV(z)
	&\lesssim  \int_{\Omega} \int_{\Omega} K(z,w) |g(w)|^p h(w)^{sp/q} h(z)^{-sp/q} \, dV(w) \, dV(z)  \\
	&= \int_{\Omega} \bigg( \int_{\Omega} K(z,w) h(z)^{-sp/q} \, dV(z) \bigg) h(w)^{sp/q} |g(w)|^p \,dV(w) \\
	&\lesssim \int_{\Omega} |g(w)|^p \,dV(w)
\end{align*}
where the last line is guaranteed to hold if and only if we can choose $s \in [a,b)$ so that $\frac{sp}{q} \in [a,b)$ as well.  This is equivalent to saying $\frac{ap}{q} = a(p-1) < b$, i.e. $p < \frac{a+b}{a}$.  Now let $g \in L^q(\Omega)$ and repeat the estimations above with $p$ and $q$ switched.  This time, we are able to conclude that $\norm{\ck(g)}_q \lesssim \norm{g}_q$, provided that we can choose $s \in [a,b)$ so that $\frac{sq}{p} \in [a,b)$ as well.  This is only possible if $\frac{bq}{p} = b(q-1) > a$, i.e. $q > \frac{a+b}{b}$.
We finish by removing the restriction that $p \ge 2$.

\end{proof}
\end{lemma}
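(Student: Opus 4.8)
The plan is to prove this as a \emph{weighted} Schur test: rather than use a single auxiliary function as in the classical $L^2$ statement, I would exploit the whole family $\{h^{-\epsilon}\}_{\epsilon\in[a,b)}$ to interpolate across a range of $p$. Fixing $g$, the goal is to bound $\|\ck g\|_p \lesssim \|g\|_p$, and the mechanism is to insert a power of $h$ into the integral defining $\ck g$, split by H\"older's inequality, absorb one factor with the first hypothesis, and — after integrating in $z$ — absorb the other with the second hypothesis. I would first treat a conjugate pair $p,q$ with $q \le 2 \le p$, postponing the complementary case.

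Concretely, with a parameter $s\in[a,b)$ to be chosen, I would write inside $\ck g(z)=\int_\Omega K(z,w)g(w)\,dV(w)$ the factorization
$$K(z,w)\,g(w) = \left[K(z,w)^{1/q} h(w)^{-s/q}\right]\left[K(z,w)^{1/p} h(w)^{s/q} g(w)\right],$$
and apply H\"older with exponents $q$ and $p$. The first hypothesis with $\epsilon=s$ controls the $q$-factor by $h(z)^{-s/q}$, leaving
$$|\ck g(z)|^p \lesssim h(z)^{-sp/q}\int_\Omega K(z,w)\,h(w)^{sp/q}\,|g(w)|^p\,dV(w).$$
Integrating in $z$, using Fubini, and applying the second hypothesis with $\epsilon = sp/q$ to the inner integral $\int_\Omega K(z,w)h(z)^{-sp/q}\,dV(z)$ produces a factor $h(w)^{-sp/q}$ that exactly cancels the inserted weight, giving $\|\ck g\|_p^p \lesssim \|g\|_p^p$. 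The argument thus succeeds provided the single parameter $s$ can be chosen with both $s$ and $sp/q$ in $[a,b)$.

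The heart of the matter is therefore a feasibility question, and this is the step I expect to be the crux — not any hard estimate, but the bookkeeping that converts the admissible window $[a,b)$ for the weight exponent into the advertised range of $p$. Since $p/q = p-1 \ge 1$ when $p\ge 2$, the lower bound $s\ge a$ already forces $sp/q\ge a$, so only the upper constraints bind and the window for $s$ is nonempty exactly when $a(p-1)<b$, i.e. $p<\frac{a+b}{a}$. To remove the assumption $p\ge 2$ I would run the symmetric computation: for $g\in L^q$ with $q\le 2$, repeat the splitting with the roles of $p$ and $q$ interchanged, where now choosing $s$ with $sq/p\in[a,b)$ is feasible precisely when $b(q-1)>a$, i.e. $q>\frac{a+b}{b}$. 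Because $\frac{a+b}{b}<2<\frac{a+b}{a}$, these two one-sided arguments together cover every exponent in $\left(\frac{a+b}{b},\frac{a+b}{a}\right)$, which is the claim. (Equivalently, the $q\le 2$ half could be deduced from the $p\ge 2$ half by passing to the adjoint operator, whose kernel $K(w,z)$ satisfies hypotheses of the same symmetric form.)
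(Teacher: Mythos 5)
Your proposal is correct and follows essentially the same route as the paper: the identical H\"older factorization of $K(z,w)g(w)$ with the weight $h^{s}$, absorption via the two hypotheses, and the same feasibility bookkeeping ($a(p-1)<b$ for $p\ge 2$, $b(q-1)>a$ for $q\le 2$) yielding the interval $\left(\frac{a+b}{b},\frac{a+b}{a}\right)$. The only (harmless) variation is your parenthetical remark that the $q\le 2$ half could instead be obtained by duality from the adjoint kernel.
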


\begin{remark}\label{R:Schur} In practice, the auxiliary function $h(w)$ will vanish on the set where the kernel $K(z,w)$ is singular (or large, as a function of $z$). Thus the product $K(z,w) h(w)^{-\epsilon}$ is {\it more} singular than $K(z,w)$, but its $L^1$ norm in $w$ is now algebraic rather than logarithmic in $\delta(z)$. See Remark \ref{R:Asymptotics}.
\end{remark}

\bigskip


\section{$L^p$ mapping properties}\label{S:LpMap}

\subsection{Non-$L^p$ boundedness}

\begin{theorem}\label{T:nonLp}
Let $p \ge 1$ be any number outside of the interval $(\frac{2k+2}{k+2}, \frac{2k+2}{k}).$  Then the Bergman projection $\mathbf{B}_k$ is not a bounded operator on $L^p(\Omega_k)$. 

\begin{proof}
In \cite{Edh15} it is shown that the set of monomials of the form $z^{\alpha}$ for the multi-indices $\alpha \in \ca_k = \{(\alpha_1, \alpha_2): \alpha_1 \ge 0 ,\, \alpha_{1} + k(\alpha_{2} +1) > -1   \}$ form an orthogonal basis for $A^2(\Omega_k)$.  Therefore, we can write the Bergman kernel as 

\begin{equation*}
\B_k(z,w) = \sum_{\alpha \in \ca_k} \frac{z^{\alpha}\bar{w}^{\alpha}}{c_{k,\alpha}^2},
\end{equation*}
where $c_{k,\alpha}$ is a normalizing constant.  Now set $f(z) := \bar{z}_2$; this is a bounded function on ${\Omega_k}$, so $f \in L^p({\Omega_k})$ for all $p \geq 1$.  It follows that

\begin{align*}
\mathbf{B}_k(f)(z) &= \int_{\Omega_k}\B_k(z,w)f(w)\,dV(w) = \int_{\Omega_k}\sum_{\alpha \in \ca_k} \frac{z^{\alpha}\bar{w}^{\alpha}}{c_{k,\alpha}^2}\cdot f(w)\,dV(w)\\
	&=\sum_{\alpha \in \ca_k} \frac{z^{\alpha}}{c_{k,\alpha}^2} \int_{\Omega_k} \bar{w}_1^{\alpha_1}\bar{w}_2^{\alpha_2+1} \, dV(w)\\
	&=\sum_{\alpha \in \ca_k} \frac{z^{\alpha}}{c_{k,\alpha}^2} \int_{\Omega_k} r_1^{\alpha_1} e^{-i{\theta_1}{\alpha_1}}r_2^{\alpha_2 +1} e^{-i{\theta_2}({\alpha_2}+1)}r_1 r_2 \,dr \,d\theta \\
	&=\sum_{\alpha \in \ca_k} \frac{z^{\alpha}}{c_{k,\alpha}^2}\left( \int_0^{2\pi} e^{-i{\theta_1}{\alpha_1}} \, d{\theta_1} \right) \left( \int_0^{2\pi} e^{-i{\theta_2}{(\alpha_2 + 1})} \, d{\theta_2} \right)\left( \int_{\omega_k} r_1^{\alpha_1 + 1}r_2^{\alpha_2 + 2} \, dr \right)\\
	&= \frac{C}{z_2},
\end{align*}
where $\omega_k = \{(r_1,r_2) \in \R^2: r_1 \ge 0, r_1^k < r_2 <1 \}$.  Here $C$ is a constant, and we used the fact that $\theta$ integrals vanish unless $\alpha_1 = 0, \alpha_2 = -1$. Thus,

\begin{align*}
\norm{\mathbf{B}_k(f)}_{p}^p &\approx \int_{\Omega_k}\frac{1}{|z_2|^p} = \int_{\Omega_k}\frac{1}{r_2^p}r_1 r_2 \, dr = 4 \pi^2 \int_{\omega_k}r_1 r_2^{1-p} \, dr\\
	&= 4 \pi^2 \int_0^1 r_2^{1-p}\int_0^{r_2^{1/k}} r_1 \, dr_1 \, dr_2\\
	&= 2 \pi^2 \int_0^1 r_2^{1-p+\frac{2}{k}} \, dr_2.
\end{align*}

This integral diverges when $p \ge \frac{2k+2}{k}$, so $\mathbf{B}_k(f)\notin L^p(\Omega_k)$ for this range of $p$.
Lemma \ref{L:symmetricLp} says that $\mathbf{B}_k$ also fails to be a bounded operator on $L^p(\Omega_k)$ when $p \in (1,\frac{2k+2}{k+2}]$.

\end{proof}
\end{theorem}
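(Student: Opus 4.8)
The plan is to prove non-boundedness by exhibiting a single, explicit bounded test function whose image under $\mathbf{B}_k$ already fails to lie in $L^p(\Omega_k)$ for $p$ in the upper tail $[\frac{2k+2}{k},\infty)$, and then to recover the lower tail $(1,\frac{2k+2}{k+2}]$ by duality. The decisive tool is the orthogonal monomial basis $\{z^\alpha : \alpha \in \ca_k\}$ for $A^2(\Omega_k)$ from \cite{Edh15}, which lets me write the kernel as a diagonal series $\B_k(z,w) = \sum_{\alpha \in \ca_k} c_{k,\alpha}^{-2}\, z^\alpha \bar w^\alpha$. The key idea is to pick a test function whose pairing against this series is annihilated by the angular integrations in all but one term, so that the projection can be computed in closed form.

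The natural candidate is $f(z) := \bar z_2$. It is bounded on the bounded domain $\Omega_k$, hence $f \in L^p(\Omega_k)$ for every $p \ge 1$, so it is an admissible test function for all $p$ at once. Applying $\mathbf{B}_k$ termwise, the coefficient of $z^\alpha$ involves $\int_{\Omega_k} \bar w_1^{\alpha_1} \bar w_2^{\alpha_2+1}\, dV(w)$; passing to polar coordinates $w_j = r_j e^{i\theta_j}$ and integrating in $\theta_1,\theta_2$ forces $\alpha_1 = 0$ and $\alpha_2 + 1 = 0$. Thus only the single index $\alpha = (0,-1) \in \ca_k$ survives, and one obtains $\mathbf{B}_k f(z) = C/z_2$ for a nonzero constant $C$. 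This is precisely a function with a pole along $\{z_2 = 0\}$, the locus of the boundary singularity of $\Omega_k$, which is what makes it diagnostic.

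It then remains to compute $\|C/z_2\|_p^p = |C|^p \int_{\Omega_k} |z_2|^{-p}\, dV$. In polar coordinates $dV = r_1 r_2\, dr\, d\theta$, the angular integrals contribute a factor $4\pi^2$, and the radial integral over $\omega_k = \{r_1 \ge 0,\ r_1^k < r_2 < 1\}$ reduces, after performing the inner $r_1$-integral up to $r_2^{1/k}$, to a constant multiple of $\int_0^1 r_2^{\,1-p+2/k}\, dr_2$. This diverges exactly when $1 - p + 2/k \le -1$, i.e. when $p \ge \frac{2k+2}{k}$, giving non-boundedness on the upper tail. For the lower tail, note that the conjugate exponent of $\frac{2k+2}{k+2}$ is $\frac{2k+2}{k}$; hence if $\mathbf{B}_k$ were bounded on $L^p$ for some $p \in (1,\frac{2k+2}{k+2}]$, Lemma \ref{L:symmetricLp} would force boundedness on $L^q$ for the conjugate $q \ge \frac{2k+2}{k}$, contradicting what was just shown. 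The only genuinely clever step — and thus the main point rather than the main obstacle — is the choice of $f = \bar z_2$: it is trivially in every $L^p$, yet its projection concentrates on the singular set so that its $L^p$ norm pins down the sharp exponent; once this is spotted, everything reduces to a routine radial integral and the duality lemma.
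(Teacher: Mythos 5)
Your proposal is correct and follows essentially the same route as the paper's own proof: the same test function $f(z)=\bar z_2$, the same termwise computation isolating the index $\alpha=(0,-1)$, the same radial integral $\int_0^1 r_2^{\,1-p+2/k}\,dr_2$ pinning down divergence for $p\ge\frac{2k+2}{k}$, and the same appeal to Lemma \ref{L:symmetricLp} for the lower tail. No gaps to report.
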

\bigskip

\subsection{An estimate related to $\B_D$} As prelude for constructing the auxiliary function needed in Schur's lemma, we first establish an
integral estimate on the Bergman kernel on the unit disc in $\C$, $\B_D(z,w)$. Note the piece of the integrand below $(1-|w|^2)$ corresponding to
the defining function for $D$. The factor $|w|^{-\beta}$ in the integrand is needed because of a Jacobian factor in the proof of Lemma \ref{L:calculus2} below.

\begin{lemma} \label{L:calculus1}
For $z \in D$, the unit disk in $\C$, let $\epsilon \in (0,1)$ and $\beta \in [0,2)$.  Then
$$\ci_{\epsilon,\beta}(z) := \int_D \frac{(1 - |w|^2)^{-\epsilon}}{|1 - z\bar{w}|^2}|w|^{-\beta} \, dV(w) \lesssim (1 - |z|^2)^{-\epsilon}.$$
\end{lemma}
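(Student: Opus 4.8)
The plan is to integrate out the angular variable to collapse the two-dimensional integral $\ci_{\epsilon,\beta}(z)$ into an explicit one-dimensional radial integral, and then to estimate that integral by splitting the radial interval at the scale set by the boundary distance $1-|z|^2$. This keeps the argument self-contained and avoids any appeal to Gamma- or Beta-function asymptotics.

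First I would exploit symmetry. Since $(1-|w|^2)^{-\epsilon}$, $|w|^{-\beta}$ and $dV$ are all rotation invariant while $|1-z\bar w|$ depends on $z\bar w$ only, I can absorb $\arg z$ into the angular variable and assume $z=|z|=:a\in[0,1)$. Writing $w=\rho e^{i\phi}$ and invoking the classical Poisson-kernel identity
$$\int_0^{2\pi}\frac{d\phi}{|1-a\rho e^{-i\phi}|^2}=\frac{2\pi}{1-a^2\rho^2}\qquad(a\rho<1),$$
the $\phi$-integration collapses, and after the substitution $u=\rho^2$ one is left with
$$\ci_{\epsilon,\beta}(z)=\pi\int_0^1\frac{(1-u)^{-\epsilon}\,u^{-\beta/2}}{1-a^2u}\,du.$$

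Next, set $t:=1-|z|^2=1-a^2$ and record the algebraic identity $1-a^2u=(1-u)+tu$, which captures how the denominator degenerates as $u\to1$ and $|z|\to1$ together. I would split the integral at $u=\tfrac12$. On $(0,\tfrac12)$ every factor except $u^{-\beta/2}$ is bounded, and $u^{-\beta/2}$ is integrable precisely because $\beta<2$; this piece contributes a constant, which is harmless since $t^{-\epsilon}\ge1$. On $(\tfrac12,1)$ the factor $u^{-\beta/2}$ is bounded, so it remains to show $\int_{1/2}^1(1-u)^{-\epsilon}(1-a^2u)^{-1}\,du\lesssim t^{-\epsilon}$. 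The substitution $v=1-u$ rewrites this as $\int_0^{1/2}\frac{v^{-\epsilon}}{t+v(1-t)}\,dv$, which I would split once more at $v=t$: on $(0,t)$ the denominator is $\ge t$, giving $t^{-1}\int_0^t v^{-\epsilon}\,dv\approx t^{-\epsilon}$, while on $(t,\tfrac12)$ the denominator is $\gtrsim v$, giving $\int_t^{1/2}v^{-1-\epsilon}\,dv\approx t^{-\epsilon}$.

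The main obstacle is this final step. Near $u=1$ the numerator $(1-u)^{-\epsilon}$ blows up while the denominator $1-a^2u$ shrinks to $t$, and one must extract the \emph{sharp} power $t^{-\epsilon}$ rather than a larger power or an extra logarithmic factor. The split at the boundary scale $v=t$ is exactly what balances these competing effects, and the hypotheses $0<\epsilon<1$ enter in an essential two-sided way: $\epsilon<1$ makes the inner integral converge at $v=0$, and $\epsilon>0$ makes the outer tail integrate to $t^{-\epsilon}$. (Alternatively one could quote the Forelli--Rudin estimate for the $\beta=0$ case and peel off $|w|^{-\beta}$ by the same $u=\tfrac12$ splitting, but the direct computation above needs no external input.)
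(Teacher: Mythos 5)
Your proof is correct and follows essentially the same strategy as the paper's: integrate out the angular variable to reduce $\ci_{\epsilon,\beta}$ to a one-dimensional radial integral, then split that integral at the scale of the boundary distance $1-|z|^2$, using $\epsilon<1$ for convergence of the near piece and $\epsilon>0$ to make the tail sum to $t^{-\epsilon}$. The only substantive difference is in the angular step: you evaluate it exactly via the Poisson-kernel identity, whereas the paper bounds it by $(1-r|z|)^{-1}$ using the elementary inequality $\sin\phi\ge 2\phi/\pi$, and you accordingly dispense with the paper's preliminary case split on $|z|\le\frac12$ versus $|z|>\frac12$ and its auxiliary cutoff at $|w|=\frac{1}{2|z|}$.
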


\begin{proof}
This result has been re-discovered many times, see for instance \cite{RudinFunctiontheory}, \cite{ZhuBergmanbook}, \cite{Chen14},
with a proof based on power series. We give a different proof here.
  
First consider the case when $|z| \le \frac{1}{2}$.  Then $|1 - z\bar{w}| \ge 1 - |z\bar{w}| \ge \frac{1}{2}$, and so 

\begin{align*}
\ci_{\epsilon,\beta}(z) &\le 4 \int_{D} (1 - |w|^2)^{-\epsilon}|w|^{-\beta} \, dV(w) \\
	&= 4\pi \int_0^1 (1-u)^{-\epsilon} u^{-\beta/2} \, du < \infty.
\end{align*}
Since this integral is bounded by a constant independent of $z$, the desired estimate holds.

Now let $|z| > \frac{1}{2}$, and for the rest of this proof let $c := \frac{1}{2|z|}$.  Now split up the integral into two pieces, called $I_1$ and $I_2$:
\begin{align*}
\ci_{\epsilon,\beta}(z) &= \int_{|w| \le c} \frac{(1 - |w|^2)^{-\epsilon}}{|1 - z\bar{w}|^2}|w|^{-\beta} \, dV(w) + \int_{|w| > c} \frac{(1 - |w|^2)^{-\epsilon}}{|1 - z\bar{w}|^2}|w|^{-\beta} \, dV(w) \\
&:= I_1 + I_2
\end{align*}

For $I_1$, notice that $|1 - z\bar{w}| \ge 1 - |z\bar{w}| \ge \frac{1}{2}$, and thus

\begin{align*}
I_1 \le 4 \int_{|w| \le c} (1 - |w|^2)^{-\epsilon}|w|^{-\beta} \, dV(w) 
	< 4 \int_{D} (1 - |w|^2)^{-\epsilon}|w|^{-\beta} \, dV(w) < \infty,
\end{align*}
and so $I_1$ satisfies the required estimate.  

It remains to be seen that $I_2$ does too.  Since $\frac{1}{2} < |z| < 1$, it must be that $\frac{1}{2} < c < 1$, and consequently,  $\frac{1}{2} < |w| < 1$ throughout all of $I_2$.  So for $\beta \in [0,2)$, we have that $1 \le |w|^{-\beta} < 4$.  This lets us make the estimate
\[
I_2 \le 4 \int_{|w| > c} \frac{(1 - |w|^2)^{-\epsilon}}{|1 - z\bar{w}|^2} \, dV(w).
\]
Now,
\begin{align*}
\int_{|w| > c} \frac{(1 - |w|^2)^{-\epsilon}}{|1 - z\bar{w}|^2} \, dV(w) 
	&= \int_c^1 r(1-r^2)^{-\epsilon} \bigg[ \int_0^{2\pi} \frac{d\theta}{1 - 2r|z|\cos{\theta}+ r^2|z|^2} \bigg] dr \\
	&= 2 \int_c^1 r(1-r^2)^{-\epsilon} \bigg[ \int_0^{\pi} \frac{d\theta}{1 - 2r|z|\cos{\theta}+ r^2|z|^2}\bigg] dr. \\
\end{align*}
Now, estimate the integral in brackets by using the fact that $\sin\phi \ge \frac{2\phi}{\pi}$ for $\phi \in [0,\frac{\pi}{2}]$, along with the half-angle formula.
\begin{align*}
\int_0^{\pi} \frac{d\theta}{1 - 2r|z|\cos{\theta}+ r^2|z|^2}
	&< \int_0^{\pi} \frac{d\theta}{(1 - r|z|)^2 + \frac{2\theta^2}{\pi^2}}\\
	&= (1 - r|z|)^{-2} \int_0^{\pi} \frac{d\theta}{1 + \frac{2}{\pi^2}\big(\frac{\theta}{1 - r|z|}\big)^2}\\
	&< (1 - r|z|)^{-1} \int_0^{\infty} \frac{du}{1 + \frac{2}{\pi^2}u^2}\\
	&\lesssim (1 - r|z|)^{-1}.
\end{align*}

Therefore, we have that
\begin{equation*}
I_2 \lesssim \int_c^1 r(1-r^2)^{-\epsilon} (1 - r|z|)^{-1} \, dr
\end{equation*}
For reasons that will soon be apparent, we want to have $|z|$ in the bounds of our integral.  Unfortunately, $|z|$ and $c = \frac{1}{2|z|}$ aren't comparable quantities.  So we make one more trivial over-estimate on $I_2$ and split this into two pieces, called $J_1$ and $J_2$.
\begin{align*}
I_2 &\lesssim \int_0^{|z|} r(1-r^2)^{-\epsilon} (1 - r|z|)^{-1} \, dr + \int_{|z|}^1 r(1-r^2)^{-\epsilon} (1 - r|z|)^{-1} \, dr \\
	&:= J_1 + J_2
\end{align*}

Now for $J_1$, since $0 \le r \le |z|$, we can say that
\begin{align*}
J_1 < \int_0^{|z|} r(1-r|z|)^{-\epsilon-1} \, dr &= -\frac{1}{|z|} \int_1^{1-|z|^2} u^{-\epsilon-1} \, du \\
	&\lesssim (1 - |z|^2)^{-\epsilon}.
\end{align*}

Finally, for $J_2$, since we know that $r \le 1$,
\begin{align*}
J_2 < (1 - |z|)^{-1} \int_{|z|}^1 r(1 - r^2)^{-\epsilon} \, dr &\lesssim (1 - |z|)^{-1} (1 - |z|^2)^{1 - \epsilon} \\
	&\lesssim (1 - |z|^2)^{-\epsilon}.
\end{align*}
This means that $I_2 \lesssim (1 - |z|^2)^{-\epsilon}$.  Therefore, $\ci_{\epsilon,\beta}(z)$ satisfies the desired estimate for all $z \in D$.

\end{proof} 

\begin{remark}\label{R:Asymptotics}
Now that we've done an explicit computation, we can illuminate Remark \ref{R:Schur}. Let $\delta(z)=
\delta_D(z)$ denote the distance of $z$ to $bD$, and $\left|\mathbf{B}_D\right|$ be the integral operator with kernel $\left|\B_D\right| \approx |1 - z\bar{w}|^{-2}$. With a slight modification to the proof of Lemma \ref{L:calculus1}, we see that for $z$ sufficiently close to $bD$,

\begin{itemize}
\item[(i)] $\left|\mathbf{B}_D\right|\left[1\right](z)\lesssim -\log\delta(z)$ and
\item[(ii)] $\left|\mathbf{B}_D\right|\left[\delta^{-\epsilon}\right](z)\lesssim\delta^{-\epsilon}(z)$.
\end{itemize}
Further, the inclusion of an integrable singular function with singularity away from $bD$ doesn't change these asymptotic estimates:  
\begin{itemize}
\item[(iii)] $\left|\mathbf{B}_D\right|\left[\frac 1{|w|^\beta}\right](z)\lesssim -\log\delta(z), \quad \beta\in [0,2)$,   and
\item[(iv)] $\left|\mathbf{B}_D\right|\left[\frac {\delta(w)^{-\epsilon}}{|w|^\beta}\right](z)\lesssim\delta^{-\epsilon}(z), \quad \beta\in [0,2)$.
\end{itemize}
These inequalities hold in much greater generality, i.e., for many $\Omega\subset\C^n$ and $\mathbf{B}_\Omega$ besides $\Omega =D$.
\end{remark}

\bigskip

\subsection{The auxiliary function for Schur's lemma} An auxiliary function adapted to the domain $\Omega_k$ for which we can apply Schur's lemma is now constructed. The function $h$ in the following Lemma essentially measures the distance of $z\in\Omega_k$ to the boundary.

\begin{lemma}\label{L:calculus2} 
Let $k \in \Z^+$,  $h(z) := (|z_2|^2 - |z_1|^{2k})(1 - |z_2|^2),$ and $\B_k(z,w)$ denote the Bergman kernel on $\Omega_k$.  Then for all $\epsilon \in \big[\frac{1}{2},\frac{k+2}{2k}\big)$ and any $z \in \Omega_k$, we have that 

\begin{equation*}
|\mathbf{B}_k|(h^{-\epsilon})(z) := \int_{\Omega_k}|\B_k(z,w)|h^{-\epsilon}(w)\,dV(w) \lesssim  h^{-\epsilon}(z).
\end{equation*}
\end{lemma}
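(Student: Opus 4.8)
The plan is to bound the integral directly from the size estimate \eqref{E:basicEstimate}: it suffices to show that
$$\cm(z) := \int_{\Omega_k}\frac{|t|}{|1-t|^2\,|t-s^k|^2}\,h(w)^{-\epsilon}\,dV(w)\lesssim h(z)^{-\epsilon},$$
where $s=z_1\bar w_1$ and $t=z_2\bar w_2$. Since $|\B_k(z,w)|=|\B_k(w,z)|$, the companion estimate needed to feed Lemma \ref{L:SchursLemma} follows by interchanging $z$ and $w$, so I treat only the displayed inequality. Writing $z_j=\rho_je^{i\psi_j}$ and $w_j=r_je^{i\phi_j}$, I would first perform the two angular integrations explicitly. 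As $\phi_1$ runs over $[0,2\pi)$ the quantity $s^k=\rho_1^kr_1^ke^{ik(\psi_1-\phi_1)}$ winds $k$ times around the circle of radius $\rho_1^kr_1^k$, so by periodicity and the standard Poisson-type identity,
$$\int_0^{2\pi}\frac{d\phi_1}{|t-s^k|^2}=\int_0^{2\pi}\frac{d\tau}{\bigl|t-\rho_1^kr_1^ke^{i\tau}\bigr|^2}=\frac{2\pi}{\rho_2^2r_2^2-\rho_1^{2k}r_1^{2k}},$$
the last step being legitimate because $\rho_1^kr_1^k<\rho_2r_2=|t|$ on $\Omega_k$. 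The same identity in $\phi_2$ gives $\int_0^{2\pi}|1-t|^{-2}\,d\phi_2=2\pi(1-\rho_2^2r_2^2)^{-1}$. This decouples the angles and reduces $\cm(z)$, up to a constant, to the two–dimensional radial integral
$$\int_0^1\!\!\int_0^{r_2^{1/k}}\frac{\rho_2\,r_2^2\,(1-r_2^2)^{-\epsilon}(r_2^2-r_1^{2k})^{-\epsilon}}{(1-\rho_2^2r_2^2)(\rho_2^2r_2^2-\rho_1^{2k}r_1^{2k})}\,r_1\,dr_1\,dr_2.$$

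Next I would evaluate the inner ($r_1$) integral. The substitution $y=r_1^{2k}$ produces the Jacobian factor $y^{1/k-1}$ — precisely the weight $|w|^{-\beta}$ anticipated in Lemma \ref{L:calculus1} — and after rescaling $y=r_2^2\tau$ the inner integral equals $r_2^{2/k-2-2\epsilon}$ times
$$\int_0^1\frac{(1-\tau)^{-\epsilon}\,\tau^{1/k-1}}{\rho_2^2-\rho_1^{2k}\tau}\,d\tau.$$
Writing the denominator as $(\rho_2^2-\rho_1^{2k})+\rho_1^{2k}(1-\tau)$ and splitting near $\tau=1$, I expect the one–variable bound $\lesssim(\rho_2^2-\rho_1^{2k})^{-\epsilon}\rho_2^{-2(1-\epsilon)}$, valid for $\epsilon\in(0,1)$: the singularity $(1-\tau)^{-\epsilon}$ and the vanishing denominator conspire to give the exponent $-\epsilon$ rather than the cruder $-1$, and this is the sharpest step. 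This yields the inner–boundary factor $(\rho_2^2-\rho_1^{2k})^{-\epsilon}$.

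It remains to carry out the outer ($r_2$) integral. Collecting the powers of $r_2$, what is left is exactly the radial form of $\ci_{\epsilon,\beta}(\rho_2)$ from Lemma \ref{L:calculus1}, with $\beta=2\epsilon+1-\tfrac2k$, so that lemma supplies the outer–boundary factor $(1-\rho_2^2)^{-\epsilon}$. Combining the three steps gives
$$\cm(z)\lesssim(\rho_2^2-\rho_1^{2k})^{-\epsilon}\,\rho_2^{\,2\epsilon-1}\,(1-\rho_2^2)^{-\epsilon},$$
and since $\epsilon\ge\tfrac12$ forces $\rho_2^{\,2\epsilon-1}\le1$ on $\Omega_k$, the right-hand side is $\lesssim h(z)^{-\epsilon}$. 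This also pins down the two endpoints of the admissible range: the lower bound $\epsilon\ge\tfrac12$ is exactly what absorbs the leftover factor $\rho_2^{\,2\epsilon-1}$, while the upper bound $\epsilon<\tfrac{k+2}{2k}$ is equivalent to $\beta<2$, the precise hypothesis under which Lemma \ref{L:calculus1} applies (one checks simultaneously that $\beta\ge 2-\tfrac2k\ge0$).

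The main obstacle I anticipate is the inner $r_1$–integral: one must extract the sharp inner–boundary factor $(\rho_2^2-\rho_1^{2k})^{-\epsilon}$ \emph{uniformly} in $z$, including the regime $\rho_1\to0$ where a naive estimate of the denominator degrades. Managing the competition between the boundary singularity of $h^{-\epsilon}$ and the size of $\rho_2^2-\rho_1^{2k}\tau$, and tracking how the resulting power of $\rho_2$ forces $\epsilon\ge\tfrac12$, is where the real work lies; the outer integral is then a black-box application of Lemma \ref{L:calculus1} (which requires $\epsilon<1$, automatic on the stated range once $k\ge2$).
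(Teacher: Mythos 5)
Your proposal is correct and follows essentially the same route as the paper: the same Fubini splitting into an inner $w_1$-integral over $\{|w_1|<|w_2|^{1/k}\}$ and an outer $w_2$-integral, the same intermediate bound $(|z_2|^2-|z_1|^{2k})^{-\epsilon}\,|z_2|^{2\epsilon-2}\,|w_2|^{2/k-2-2\epsilon}$ for the inner piece, the same application of Lemma \ref{L:calculus1} with $\beta=2\epsilon+1-\tfrac2k$ to the outer piece, and the same final absorption of $|z_2|^{2\epsilon-1}$ using $\epsilon\ge\tfrac12$. The only difference is cosmetic: the paper handles the inner integral by the substitution $u=w_1^k/w_2$ (sending the $w_1$-region to $k$ copies of the disc) and a second appeal to Lemma \ref{L:calculus1} with $\beta=2-\tfrac2k$, whereas you integrate out the angles first via the Poisson identity and bound the resulting one-dimensional integral by hand, which is an equivalent computation.
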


\begin{proof} From estimate \eqref{E:basicEstimate}, we see that

\begin{align*}
|\mathbf{B}_k|(h^{-\epsilon})(z) &\lesssim \int_{\Omega_k} \frac{|z_{2}\bar{w}_{2}| (|w_2|^2 - |w_1|^{2k})^{-\epsilon}(1 - |w_2|^2)^{-\epsilon}}{|1-z_{2}\bar{w}_{2}|^2|z_{2}\bar{w}_{2}-z_{1}^k\bar{w}_{1}^k|^2}\,dV(w)\\
	&= \int_{D^*} \frac{|z_{2}\bar{w}_{2}| (1 - |w_2|^2)^{-\epsilon}}{|1-z_{2}\bar{w}_{2}|^2} \bigg[ \int_{W} \frac{(|w_2|^2 - |w_1|^{2k})^{-\epsilon}}{|z_{2}\bar{w}_{2}-z_{1}^k\bar{w}_{1}^k|^2} \, dV(w_1)\bigg] dV(w_2)
\end{align*}
where the outside integral is the over region $D^* := \{w_2:0 < |w_2| < 1\}$ and the integral in brackets is over the region $W := \{w_1 : |w_1| < |w_2|^{1/k}\}$, where $w_2$ is considered to be fixed.  Denote the integral in brackets by $A$ and focus on this.

\begin{align*}
A = \frac{1}{|z_2|^2 |w_2|^{2 + 2\epsilon}} \int_{W} \bigg(1 - \bigg|\frac{w_1^k}{w_2}\bigg|^2 \bigg)^{-\epsilon} \bigg| 1 - \frac{z_1^k\bar{w}_1^k}{z_2\bar{w}_2}\bigg|^{-2}  dV(w_1).
\end{align*}

Make the substitution $u = \frac{w_1^k}{w_2}$.  This transformation sends $W$ to $k$ copies of $D$, the unit disc in the $u$-plane.  Lemma \ref{L:calculus1} now yields

\begin{align*}
A &= \frac{|w_2|^{2/k - 2 - 2\epsilon}}{k |z_2|^2} \int_{D} \frac{(1 - |u|^2)^{-\epsilon}}{|1 - z_1^k z_2^{-1} \bar{u}|^2} \cdot |u|^{2/k -2} \, dV(u) \\
	&\lesssim \frac{|w_2|^{2/k -2 -2\epsilon}}{|z_2|^2} \bigg(1 - \bigg| \frac{z_1^k}{z_2}\bigg|^2 \bigg)^{-\epsilon} \\
	&=  \frac{|w_2|^{2/k -2 -2\epsilon}}{|z_2|^{2-2\epsilon}} (|z_2|^2 - |z_1|^{2k})^{-\epsilon}.
\end{align*}

This means that
\begin{align*}
|\mathbf{B}_k|(h^{-\epsilon})(z) &\lesssim |z_2|^{2\epsilon - 1} (|z_2|^2 - |z_1|^{2k})^{-\epsilon} \int_{D^*} \frac{(1 - |w_2|^2)^{-\epsilon}}{|1-z_{2}\bar{w}_{2}|^2} |w_2|^{2/k - 1 - 2\epsilon} \, dV(w_2)\\
	&\lesssim |z_2|^{2\epsilon - 1} (|z_2|^2 - |z_1|^{2k})^{-\epsilon} (1 - |z_2|^2)^{-\epsilon}
\end{align*}
where the last line holds if and only if the conditions on Lemma \ref{L:calculus1} hold.  This happens when $\frac{2}{k} - 1 - 2\epsilon > -2$, i.e. $\frac{k+2}{2k} > \epsilon$.  Finally, 
\[
|\mathbf{B}_k|(h^{-\epsilon})(z) \lesssim |z_2|^{2\epsilon - 1} (|z_2|^2 - |z_1|^{2k})^{-\epsilon} (1 - |z_2|^2)^{-\epsilon} \le h(z)^{-\epsilon}
\]
for all $z \in \Omega_k$ whenever $\epsilon \ge \frac{1}{2}$.  This completes the proof.
\end{proof}

\bigskip

\subsection{Proof of Theorem \ref{T:main}}  With the tools above in hand, the proof of our main result is easy to conclude:

\begin{proof}
Combining Lemma \ref{L:calculus2} and Schur's lemma (Lemma \ref{L:SchursLemma}) yields that the operator $|\mathbf{B}_k|$ is bounded from $L^p(\Omega_k)$ to $L^p(\Omega_k)$ for $p \in (\frac{2k+2}{k+2}, \frac{2k+2}{k})$.  A fortiori, $\mathbf{B}_k$ is bounded from $L^p(\Omega_k)$ to $A^p(\Omega_k)$ for $p$ in the same range.  Notice that because of the conjugate symmetry of $\B_k(z,w)$, it is sufficient to establish just one of the estimates to apply Lemma \ref{L:SchursLemma}.  

On the other hand, Theorem \ref{T:nonLp} shows that $p \in (\frac{2k+2}{k+2}, \frac{2k+2}{k})$ are the only values for which $\mathbf{B}_k$ is bounded on $L^p(\Omega_k)$. This completes the proof.
\end{proof}
\medskip

The exact same proof  yields a more general version of half of Theorem \ref{T:main}:

\begin{corollary}\label{C:extendedLp} Let $p \in (\frac{2k+2}{k+2},\frac{2k+2}{k})$. Using the previous notation, if $K(z,w)$ is any kernel on $\Omega_k$ satisfying

\begin{equation*}
|K(z,w)| \lesssim \frac{|t|}{|1-t|^2|t-s^k|^2},
\end{equation*}
then the operator 
$$\ck f(z) :=\int_{\Omega_k} \left|K(z,w)\right| f(w)\, dV(w)$$ 
maps $L^p\left(\Omega_k\right)$ to $L^p\left(\Omega_k\right)$ boundedly.

\end{corollary}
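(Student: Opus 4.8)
The plan is to recognize that the boundedness half of Theorem \ref{T:main} never used any structural feature of the Bergman kernel beyond the pointwise majorization \eqref{E:basicEstimate}, together with a single symmetry of the majorant. Writing $M(z,w)$ for the right-hand side of \eqref{E:basicEstimate}, namely $M(z,w) = \frac{|t|}{|1-t|^2|t-s^k|^2}$ with $s = z_1\bar w_1$ and $t = z_2\bar w_2$, the hypothesis on $K$ is exactly $|K(z,w)| \lesssim M(z,w)$. It therefore suffices to show that the positive integral operator with kernel $M$ is bounded on $L^p(\Omega_k)$ for $p$ in the stated range, since then $\ck$, whose kernel $|K(z,w)|$ is dominated by a constant multiple of $M$, inherits the bound.

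First I would record the two Schur estimates for $M$ with respect to the auxiliary function $h$ of Lemma \ref{L:calculus2}. Because the proof of that lemma starts from \eqref{E:basicEstimate} and manipulates only $M$, it already establishes, for every $\epsilon \in \big[\tfrac12, \tfrac{k+2}{2k}\big)$, the estimate
\[
\int_{\Omega_k} M(z,w)\, h(w)^{-\epsilon}\, dV(w) \lesssim h(z)^{-\epsilon}.
\]
The delicate point — and the reason the conjugate symmetry of $\B_k$ invoked in the proof of Theorem \ref{T:main} cannot simply be carried over — is the companion estimate obtained by integrating in $z$, for a general $K$ has no conjugate symmetry. The remedy is that the majorant $M$ is itself symmetric in modulus under the interchange $z \leftrightarrow w$: each of the quantities $1 - z_2\bar w_2$ and $z_2\bar w_2 - z_1^k\bar w_1^k$ is carried to its complex conjugate by the swap, and $|t| = |z_2||w_2|$ is manifestly symmetric, so $M(z,w) = M(w,z)$. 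Applying the displayed estimate with the roles of $z$ and $w$ exchanged then yields
\[
\int_{\Omega_k} M(z,w)\, h(z)^{-\epsilon}\, dV(z) \lesssim h(w)^{-\epsilon}.
\]

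Finally, both hypotheses of Schur's Lemma (Lemma \ref{L:SchursLemma}) hold for the kernel $|K(z,w)|$ with $a = \tfrac12$ and $b = \tfrac{k+2}{2k}$; a direct check gives $a+b = \tfrac{k+1}{k}$, whence $\tfrac{a+b}{b} = \tfrac{2k+2}{k+2}$ and $\tfrac{a+b}{a} = \tfrac{2k+2}{k}$, so Lemma \ref{L:SchursLemma} delivers the boundedness of $\ck$ on $L^p(\Omega_k)$ for precisely $p \in \big(\tfrac{2k+2}{k+2}, \tfrac{2k+2}{k}\big)$. I expect the only genuine obstacle here to be conceptual rather than computational: one must decouple the argument from the conjugate symmetry of $\B_k$ and re-ground it on the modulus-symmetry of $M$. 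Once that single substitution is made, the reasoning used for Theorem \ref{T:main} applies verbatim.
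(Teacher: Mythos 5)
Your proposal is correct and is essentially the paper's own argument: the authors simply observe that the proof of the boundedness half of Theorem \ref{T:main} uses nothing about $\B_k$ beyond the majorization \eqref{E:basicEstimate}, so Lemma \ref{L:calculus2} and Schur's lemma with $a=\tfrac12$, $b=\tfrac{k+2}{2k}$ apply verbatim to any kernel dominated by the same majorant. Your extra remark that the second Schur estimate must now come from the modulus-symmetry $M(z,w)=M(w,z)$ of the majorant, rather than from the conjugate symmetry of $\B_k$, is a correct and worthwhile clarification of a point the paper leaves implicit.
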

\bigskip

\subsection{Dual space of $A^p$} The $L^p$ boundedness of $\mathbf{B}$ in Theorem \ref{T:main} yields a simple characterization of the dual space of $A^p\left(\Omega_k\right)$, for $p \in (\frac{2k+2}{k+2},\frac{2k+2}{k})$. Namely, the graded Bergman spaces $A^p\left(\Omega_k\right)$ are H\" older-dual, precisely like the ordinary $L^p$ spaces, for these $p$.

Let $A^p(\Omega)^*$ denote the bounded linear functionals on $A^p(\Omega)$ and, for $\ell\in A^p(\Omega)^*$, let $\|\ell\|_{op}=\sup_{\|f\|_p=1} \left|\ell(f)\right|$ denote the usual operator
norm.

\begin{theorem}\label{T:dual space} If $\Omega_k$ denotes the domain  \eqref{D:fatHartogs}, then for $p \in (\frac{2k+2}{k+2},\frac{2k+2}{k})$ it holds that

\begin{equation*}
A^p\left(\Omega_k\right)^*\cong A^q\left(\Omega_k\right), \qquad\text{where}\,\,\,\,\,\frac 1p +\frac 1q =1.
\end{equation*}
Moreover, for each $\psi\in A^p\left(\Omega_k\right)^*$, there exists a unique $h_\psi\in A^q\left(\Omega_k\right)$ such that

\begin{equation}\label{E:rep1}
\psi(f)=\left\langle f, {h_\psi}\right\rangle
\end{equation}
with $\|\psi\|_{op}\approx \left\|h_\psi\right\|_q$.

\end{theorem}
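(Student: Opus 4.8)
The plan is to deduce the duality from the $L^p$-boundedness of $\mathbf{B}_k$ by the classical argument that identifies $A^p(\Omega_k)^*$ with $A^q(\Omega_k)$ through the Bergman pairing $\langle\cdot,\cdot\rangle$. Throughout, fix conjugate exponents $p,q\in(\frac{2k+2}{k+2},\frac{2k+2}{k})$; by Theorem \ref{T:main} and Lemma \ref{L:symmetricLp} the projection $\mathbf{B}_k$ is bounded on both $L^p(\Omega_k)$ and $L^q(\Omega_k)$, and it is self-adjoint on $L^2$. The easy inclusion is immediate: for $h\in A^q(\Omega_k)$ the functional $f\mapsto\langle f,h\rangle$ is bounded on $A^p(\Omega_k)$ by H\"older's inequality, with operator norm at most $\|h\|_q$. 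The substance of the theorem is that every functional arises this way, with a comparable norm.

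For surjectivity I would start from $\psi\in A^p(\Omega_k)^*$ and extend it, by the Hahn--Banach theorem, to a bounded linear functional on all of $L^p(\Omega_k)$ without increasing its norm. Since $1<p<\infty$, the Riesz representation of $(L^p)^*$ produces $g\in L^q(\Omega_k)$ with $\psi(f)=\langle f,g\rangle$ for every $f\in A^p(\Omega_k)$ and $\|g\|_q=\|\psi\|_{op}$. The candidate representative is then $h_\psi:=\mathbf{B}_k g$, which lies in $A^q(\Omega_k)$ because $\mathbf{B}_k$ is bounded from $L^q$ into $A^q$, and which satisfies $\|h_\psi\|_q\lesssim\|g\|_q=\|\psi\|_{op}$. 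To see that $h_\psi$ represents $\psi$, I would use the chain
\begin{equation*}
\psi(f)=\langle f,g\rangle=\langle\mathbf{B}_k f,g\rangle=\langle f,\mathbf{B}_k g\rangle=\langle f,h_\psi\rangle,\qquad f\in A^p(\Omega_k),
\end{equation*}
whose middle equalities rest on two facts: that $\mathbf{B}_k$ reproduces holomorphic functions, $\mathbf{B}_k f=f$ for $f\in A^p(\Omega_k)$, and the adjoint relation $\langle\mathbf{B}_k f,g\rangle=\langle f,\mathbf{B}_k g\rangle$ valid for $f\in L^p$, $g\in L^q$. The adjoint relation is routine: it holds on $L^2$ by self-adjointness, and since $\Omega_k$ has finite measure, $L^\infty$ is dense in each of $L^p$ and $L^q$, so one passes to the limit using boundedness of $\mathbf{B}_k$ together with continuity of the $L^p\times L^q$ pairing.

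The norm equivalence and uniqueness are then quick. Applying the H\"older bound of the first paragraph to $h=h_\psi$ gives $\|\psi\|_{op}\le\|h_\psi\|_q$, which combined with $\|h_\psi\|_q\lesssim\|\psi\|_{op}$ yields $\|\psi\|_{op}\approx\|h_\psi\|_q$. For uniqueness it suffices to show that $h\in A^q(\Omega_k)$ with $\langle f,h\rangle=0$ for all $f\in A^p(\Omega_k)$ must vanish. I would test against the kernel sections $f=\B_k(\cdot,w)$: by estimate \eqref{E:basicEstimate} and the radial computation in Theorem \ref{T:nonLp}, the boundary singularity of $\B_k(\cdot,w)$ is of size $\lesssim|z_2|^{-1}$ near the origin, so $\B_k(\cdot,w)\in A^p(\Omega_k)$ precisely when $p<\frac{2k+2}{k}$, i.e. throughout our range. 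Using $\overline{\B_k(w,z)}=\B_k(z,w)$ and the reproducing property on $A^q$, one finds $0=\langle\B_k(\cdot,w),h\rangle=\overline{\mathbf{B}_k h(w)}=\overline{h(w)}$ for every $w$, whence $h\equiv0$.

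The main obstacle is the reproducing identity $\mathbf{B}_k f=f$ on $A^p(\Omega_k)$ (and its analogue on $A^q$), on which both surjectivity and uniqueness hinge. When the exponent is at least $2$ it is automatic, since $\Omega_k$ is bounded and hence $A^p\subseteq A^2$, where $\mathbf{B}_k$ is the orthogonal projection onto holomorphic functions. When the exponent drops below $2$ the inclusion fails -- $A^p(\Omega_k)$ is strictly larger than $A^2(\Omega_k)$ -- and one must verify directly that $\mathbf{B}_k$ still returns $f$. The natural route is to exploit the Reinhardt symmetry of $\Omega_k$: averaging over the torus action splits $f$ into its Laurent components $a_\alpha z^\alpha$, each controlled in $L^p$ by $\|f\|_p$ via Minkowski's inequality, so the problem reduces to understanding, for $p$ in the admissible interval, which monomials can occur and how $\mathbf{B}_k$ acts on them against the $A^2$ basis $\{z^\alpha:\alpha\in\ca_k\}$. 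Reconciling the holomorphic $L^p$ functions with this basis -- the point at which the endpoints $\frac{2k+2}{k+2}$, $\frac{2k+2}{k}$ and the singular geometry at the origin enter decisively -- is the delicate heart of the matter; once it is settled, everything else is formal duality.
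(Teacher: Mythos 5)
Your overall architecture is the same as the paper's: Hahn--Banach extension of $\psi$ to $L^p(\Omega_k)^*$, Riesz representation giving $g\in L^q(\Omega_k)$, the candidate $h_\psi:=\mathbf{B}_kg\in A^q(\Omega_k)$ via Theorem \ref{T:main}, and the chain $\psi(f)=\langle f,g\rangle=\langle \mathbf{B}_kf,g\rangle=\langle f,\mathbf{B}_kg\rangle$, with the easy direction and the norm equivalence handled exactly as you do. The one technical divergence is how the adjoint identity is justified off of $L^2$: you propose $L^\infty$-density plus continuity of the pairing, while the paper writes out the double integral and applies Fubini, using Corollary \ref{C:extendedLp} to verify absolute convergence. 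Both are legitimate; the paper's route has the advantage of never needing to identify the abstract continuous extension of $\mathbf{B}_k$ with the pointwise integral operator.

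The gap is the one you yourself flag and do not close: the reproducing identity $\mathbf{B}_kf=f$ for $f\in A^p(\Omega_k)$ when $p<2$ (equivalently, for the uniqueness argument, on $A^q$ when $q<2$). As written, your proof of surjectivity and your kernel-section proof of uniqueness both invoke this identity below exponent $2$, and the Laurent-decomposition strategy you sketch is only a plan --- you would need to actually prove that $A^2\cap A^p$ is dense in $A^p$ (or directly that the monomial expansion of an $A^p$ function is supported on $\ca_k$ and reproduced term by term), and this is where the geometry at the origin genuinely enters. The paper organizes the argument to defuse this as far as possible: for $p\in[2,\frac{2k+2}{k})$ it uses that $\Omega_k$ is bounded, so $A^p\subset A^2$ and $\mathbf{B}_kf=f$ is free from the $L^2$ theory, and then only Fubini is needed; for $p\in(\frac{2k+2}{k+2},2]$ it starts from $\langle f,h_\psi\rangle$ and runs the computation backwards. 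To be fair to you, that second case in the paper is terse and, read literally, still ends at $\int_{\Omega_k}\B_k(z,w)f(w)\,dV(w)$ needing to equal $f(z)$ for $f\in A^p$, $p\le 2$ --- so you have put your finger on the genuinely delicate point of the theorem. But identifying the obstacle is not the same as removing it: until the sub-$2$ reproducing property (or the density of $A^2\cap A^p$ in $A^p$) is actually established, your proposal proves the theorem only for $p\ge 2$, with the complementary range resting on an unproven assertion. Your observation that $\B_k(\cdot,w)\in A^p(\Omega_k)$ precisely for $p<\frac{2k+2}{k}$ is correct and is a nice touch for uniqueness, but it too feeds into the same unproven identity $\mathbf{B}_kh=h$ on $A^q$.
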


\begin{proof}

If $h\in A^q\left(\Omega_k\right)$, H\"older's inequality implies that the linear functional

$$f \overset\ell\longrightarrow\int_{\Omega_k} f\cdot\bar h\, dm$$
is bounded and $\|\ell\|_{op}\leq \|h\|_q$.

Conversely, if $\psi\in A^p\left(\Omega_k\right)^*$, the Hahn-Banach theorem extends $\psi$ to a functional $\Psi\in L^p\left(\Omega_k\right)^*$ with $\|\Psi\|_{op}=\|\psi\|_{op}$.
The Riesz representation theorem gives a $g_\psi\in L^q(\Omega_k)$ such that

$$\Psi(f)=\left\langle f,{g_\psi}\right\rangle,\qquad f\in L^p(\Omega_k)$$
and $\|g_{\psi}\|_q=\|\Psi\|_{op}$.

The idea of the proof is very simple. If $g_\psi$ also belongs to $L^2(\Omega_k)$, set $h_\psi =\mathbf{B}g_\psi$. Then for $f\in A^p(\Omega_k)\cap A^2(\Omega_k)$,
the self-adjointness of $\mathbf{B}$  gives

\begin{align*}
\psi(f)&=\Psi(f)=\left\langle f,g_\psi\right\rangle \\ &=\left\langle \mathbf{B}f,g_\psi\right\rangle \\
&= \left\langle f,\mathbf{B}g_\psi\right\rangle = \left\langle f,h_\psi\right\rangle,
\end{align*}
thus $h_\psi$ represents the functional $\psi$ al\' a \eqref{E:rep1}. The issue is that both $L^p(\Omega_k), L^q(\Omega_k)\subset L^2(\Omega_k)$ only when $p=q=2$, so 
$g_\psi$ and a general $f\in A^p(\Omega_k)$ are not simultaneously in the domain of $\mathbf{B}$.

However, a slight modification of this basic idea yields the result. Suppose first that $p\in\left[2,\frac{2k+2}{k}\right)$ and let $f\in A^p\left(\Omega_k\right)$. Then
$f\in A^2\left(\Omega_k\right)$ since $\Omega_k$ is bounded.  As $g_\psi\in L^q$ and $q$ is in the allowable range, Theorem \ref{T:main} says that $h_\psi:=\mathbf{B}g_\psi\in A^q\left(\Omega_k\right)$,
and consequently the inner product $\left(f,h_\psi\right)$ is well-defined.
Since $\mathbf{B}f=f$, we have

\begin{align}\label{E:rep2}
\psi(f)&=\int_{\Omega_k}f\cdot\overline{g_\psi}\notag \\
&=\int_{\Omega_k}\left(\int_{\Omega_k}\B_k(z,w)f(w)\, dV(w)\right)\cdot\overline{g_\psi}(z)\, dV(z).
\end{align}
We now want to apply Fubini's theorem to the last integral. This is justified since Corollary \ref{C:extendedLp} says the
operator

\begin{equation*}
g_\psi\longrightarrow \int_{\Omega_k}\left|\B_k(z,w)\right|\, \left|g_\psi(z)\right|\, dV(z)
\end{equation*}
maps $L^q\left(\Omega_k\right)$ to $L^q\left(\Omega_k\right)$ boundedly. It follows that

\begin{align*}
\eqref{E:rep2}&= \int_{\Omega_k}f(w)\left(\int_{\Omega_k}\B_k(z,w)\overline{g_\psi}(z)\, dV(z)\right)\, dV(w) \\
&=\int_{\Omega_k}f(w)\overline{h_\psi}(w)\, dV(w) =\left\langle f,h_\psi\right\rangle
\end{align*}
as desired.

If $p\in\left(\frac{2k+2}{k+2},2\right]$ instead, run the argument above backwards, starting with

$$\left\langle f,h_\psi\right\rangle =\int_{\Omega_k} f(w)\,\,\overline{\int_{\Omega_k}\B_k(w,z)g_\psi(z)\, dV(z)}\,\, dV(w)$$
and see that \eqref{E:rep1} holds in this case too.

\end{proof}

\section{Concluding Remarks}

1. Theorem \ref{T:main} holds for $k \in \Z^+$.  If $\Omega_r$ is the fat Hartogs triangle of exponent $r \in (1,\infty)$, it is reasonable to expect a restricted range of $p$ for which the Bergman projection $\mathbf{B}_r$ is a bounded operator on $L^p(\Omega_r)$.  In fact, Theorem \ref{T:nonLp} is still valid for non-integer exponents, which rules out boundedness for any $p \notin (\frac{2r+2}{r+2}, \frac{2r+2}{r})$.  It would be interesting to deduce an analog of estimate (\ref{E:basicEstimate}), which could help us see which exponents work. 

\medskip

2.  For $\Omega_k$, $k \in \Z^+$, the proof of Theorem \ref{T:nonLp} shows a single anti-holomorphic monomial establishes the entire range of $p$ for which the Bergman projection is not a bounded operator.  Thus the $L^p$ behavior of $\mathbf{B}_k$ on the anti-holomorphic subspace $\overline{A^p(\Omega_k)} \subset L^p(\Omega_k)$ determines the behavior of $\mathbf{B}_k$ on the full space.  What conditions are needed on a domain $\Omega$ for this surprising observation to occur?  It's worth mentioning that if $\Omega$ is a Reinhardt domain containing the origin, the Bergman projection of a non-constant anti-holomorphic monomial yields $0$.

\medskip

3. Clearly the singularity of $b\Omega_k$ at $0$ limits the $L^p$ mapping behavior of its Bergman projection, but the exact nature of this type of singularity is unexplored.  How does $\mathbf{B}$ change if we replace $|z_1|^k$ in definition (\ref{D:fatHartogs}) with a more general $\psi\left(\left|z_1\right|\right)$? What happens on non-Reinhardt domains with this type of boundary singularity?  Can $\Omega_k$ be used as a model domain for a class of invariantly defined domains in $\C^2$ with singular boundary?

\medskip

4. It would be interesting to investigate higher dimensional domains with analogous boundary singularities.  Chen, \cite{Chen14}, introduces a class of domains in $\C^n$ which generalize the classical Hartogs triangle.  Domains of the form  $$ \{(z_1,\cdots,z_n): |z_1|^{k_1} < |z_2|^{k_2} < \cdots < |z_{n-1}|^{k_{n-1}} < |z_n| < 1 \}$$
generalize the Hartogs triangle differently than Chen does, and are the natural extension of the fat Hartogs triangles to higher dimensions.  These domains would provide an intriguing class of examples from which the interplay between geometry and function theory might be further understood.
\medskip

5. We have not addressed endpoint results in this paper, but it is natural to ask: where does $\mathbf{B}_k$ map $L^{\frac{2k+2}{k+2}}\left(\Omega_k\right)$ and $L^{\frac{2k+2}{k}}\left(\Omega_k\right)$? For both endpoints,
it seems likely that $\mathbf{B}_k$ maps $L^r\left(\Omega_k\right)$ into weak $L^r$, analogous to the weak-type $(1,1)$ mapping behavior of $\mathbf{B}_\Omega$ known to hold on classes of (smoothly bounded) finite type domains, see \cite{McNeal94b}. For the upper endpoint, it may be possible to say more. Recall that for $D$, the unit disc in $\C$, $\mathbf{B}_D$ maps $L^\infty(D)$ {\it onto} the holomorphic functions satisfying $\sup_{z\in D} \left(1-|z|^2\right)|f'(z)| < \infty$, the classical Bloch space. It would be interesting to find a similar characterization of the range of $\mathbf{B}_k$ acting on $L^{\frac{2k+2}{k}}\left(\Omega_k\right)$, though the appropriate ``Bloch-like condition'' on $\Omega_k$ is unclear at this point, partly because $b\Omega_k$ is not smooth.

\bigskip
\bigskip

\bibliographystyle{acm}
\bibliography{EdhMcN}

\end{document}